\def\minwrt[#1]{\underset{#1}{\text{minimize }}}
\def\argminwrt[#1]{\underset{#1}{\text{arg min }}}
\def\maxwrt[#1]{\underset{#1}{\text{maximize }}}
\def\argmaxwrt[#1]{\underset{#1}{\text{arg max }}}
\def\maxemphwrt[#1]{\underset{#1}{\text{\emph{maximize} }}}
\newcommand{\ett}{{\bf 1}}
\newcommand{\mR}{{\mathbb R}}
\newcommand{\diag}{{\rm diag}}
\newtheorem{remark}{Remark}
\newtheorem{proposition}{Proposition}
\newtheorem{definition}{Definition}
\newcommand{\trace}[1]{\text{tr}\left(#1\right)}
\newcommand{\supp}{\rm supp}
\def\ccP{{\mathcal{P}}}
\def\ccW{{\mathcal{W}}}
\def\ccN{{\mathcal{N}}}
\def\RN{{\mathbb{N}}}
\def\RR{{\mathbb{R}}}
\title{\LARGE \bf
Estimating ensemble flows on a hidden Markov chain
}
\author{Isabel Haasler, Axel Ringh, Yongxin Chen, and Johan Karlsson
\thanks{This work was supported by the Swedish Research Council (VR), grant 2014-5870, SJTU-KTH cooperation grant and the NSF under grant 1901599.
}
\thanks{I.~Haasler, A.~Ringh, and J.~Karlsson are with the Division of Optimization and Systems Theory, Department of Mathematics, KTH Royal Institute of Technology, Stockholm, Sweden. {\tt\small \{haasler, aringh\}@kth.se}, {\tt\small johan.karlsson@math.kth.se}}%
\thanks{Y. Chen is with the School of Aerospace Engineering,
Georgia Institute of Technology, Atlanta, GA, USA. {\tt\small yongchen@gatech.edu}}%
}
\begin{document}

\maketitle
\thispagestyle{empty}
\pagestyle{empty}

\begin{abstract}

We propose a new framework to estimate the evolution
of an ensemble of indistinguishable agents on a hidden Markov chain using only aggregate output data.
This work can be viewed as an extension of the recent developments in optimal mass transport and Schrödinger bridges to the finite state space hidden Markov chain setting.
The flow of the ensemble is estimated by solving a maximum likelihood problem, which has a convex formulation at the infinite-particle limit, and we develop a fast numerical algorithm for it.
We illustrate in two numerical examples how this framework can be used to track the flow of identical and indistinguishable dynamical systems.

\end{abstract}

\section{Introduction}

State tracking of a set of agents is an important issue in many areas, e.g., target tracking, see \cite{blackman1999design} and references therein.
In this case, one is often interested in tracking one single or a set of multiple distinct targets.
However, in many applications information for each agent may not be available, e.g., if the population is too large to track every single agent, as in many biological systems, or due to data privacy \cite{King97}.
In this work, we thus consider tracking the evolution of a finite ensemble of indistinguishable agents. Based on reduced and incomplete measurements of the whole population at different time points, we aim to recover an estimate of the discrete-time flow of the ensemble. Related state estimation problems for a continuum of agents and in continuous time have been considered in \cite{chen2018state,zeng2016ensemble} (see also \cite{chen2018measure}).

In this work, we use a hidden Markov model (HMM) to describe the particle flows and aggregate observations, similar to \cite{bernstein2016}, and seek the most likely paths that the agents have taken. These paths are found by maximizing the log-likelihood function of the flow, subject to the constraint that the flow matches the given measurements. This gives rise to a convex maximum entropy type optimization problem, and we derive an efficient algorithm for solving it.

The problem of finding the most likely path for the evolution of a distribution is related to a discrete Schrödinger bridge problem \cite{pavon2010discrete}.

Schrödinger's thought experiment \cite{schrodinger1931} has indirectly given rise to the concept of reciprocal processes 
\cite{jamison1974reciprocal, levy1990modeling, carravetta2012modelling}, which connects this work to 
tracking of moving objects using reciprocal processes \cite{fanaswala2011destination, white2013maximum, stamatescu2018track}.
However, as mentioned before, we consider estimating the flow of an ensemble rather than single target tracking.

The outline of the paper is as follows: Section II presents background material, in particular on HMMs, Schrödinger bridges, and optimal mass transport. In section III we derive the maximum likelihood problem for a Markov chain with a known initial and final distribution,
and relate it to prior work on the Schrödinger bridge problem \cite{pavon2010discrete}. In Section IV, which contains the main contribution, we extend this maximum entropy framework to HMMs with indirect and noisy observations. Moreover, we derive the corresponding maximum likelihood problem, and develop a fast iterative algorithm for solving it.
The method is demonstrated on two examples
in section V,
and section VI contains conclusions and future directions.
Some proofs are deferred to the appendix for improved readability.  

\section{Background}
\subsection{Notation}
By $./$, $\odot$, $\log(\cdot)$, and $\exp(\cdot)$ we denote elementwise division, multiplication, logarithm, and exponential of matrices and vectors. Moreover, by $\supp(\cdot)$ we denote the support of a matrix, i.e., the non-zero elements.

\subsection{Hidden Markov chains}\label{ssec:HMM}
In this work, we consider hidden Markov models for stochastic modeling of a group of indistinguishable agents/particles. For an introduction to HMMs, see, e.g., \cite{rabiner1989tutorial, ghahramani2001introduction}. An HMM is a structure that consists of two stochastic processes. The first part is a Markov chain that evolves over a hidden set of states $X= \left\{ X_1, X_2, \dots, X_n \right\}$ and is used to model the unobserved, underlying state of the system. We denote the state at time $t$ by $q_t$. The stochastic state transitions are encoded in the state transition matrix $A=\left[a_{ij}\right]_{i,j = 1}^n$, where $a_{ij} = P( q_{t+1}=X_j | q_t = X_i )$. The second part is an observation process providing partial and noisy information of the underlying process; here we use the observation symbols $Y=\left\{ Y_1, Y_2,\dots, Y_m \right\}$. Moreover, the observation process is also Markovian with respect to the underlying state in the hidden Markov chain, i.e., the observation probabilities
can be summarized in a matrix $B\in\RR^{n\times m}$ with elements $b_{jk}= P( Y_k \text{ at } t | q_t= X_j )$.
\subsection{Schrödinger bridges and large deviations}
\label{ssec:bridge}
In the early 1930s, Schrödinger discussed the problem of determining the evolution of particles between two observed distributions \cite{schrodinger1931}.
Assuming a cloud of independent Brownian particles is observed at time instance $t=0$, the expected distribution at $t=1$ would be described by
\begin{equation} \label{eq:schrodinger_final_distribution}
\rho_1(x_1) = \int_{\RR^n} q_\epsilon(0,x_0,1,x_1) \rho_0(x_0) dx_0,
\end{equation}
where $q_\varepsilon$ is the Brownian transition probability kernel
\begin{equation*}
q_\epsilon(s,x,t,y) =  \frac{1}{\left(2\pi(t-s)\epsilon\right)^{n/2}}  \exp\left( -\frac{\|x-y\|^2}{2(t-s)\epsilon} \right),
\end{equation*}
and where the parameter $\epsilon$ denotes a diffusion coefficient.
Schrödinger studied the problem where the observed particle distribution differs from the expected distribution \eqref{eq:schrodinger_final_distribution}. The most likely particle evolution connecting, hence bridging, the two marginals is called the Schrödinger bridge.

The Schrödinger bridge problem was later formulated in the context of large deviation theory \cite[Sec.~II.1.3]{Follmer88}, the study of rare events in the sense of deviations from the law of large numbers \cite{ellis2006book, dembo2009large}. As the number of trials (or particles) goes to infinity, the probability of such rare events approaches zero. Large deviation theory studies the rate of this decay, which can often be characterized by the exponential of a so called rate function.

Modeling the particle evolutions as independent identically distributed random variables on path space, a Schrödinger bridge is a probability measure $\ccP$ on path space that is most likely to describe the rare event of observing the two particle distributions. 
Such a measure is obtained by minimizing the corresponding rate function, which turns out to be the relative entropy with respect to the underlying probability law of the Brownian motion. 
In other words, $\ccP$  is the measure that is ``most similar'' to the Wiener measure $\ccW$ in the sense that it minimizes the relative entropy \cite{leonard2013schrodinger}
\begin{equation} \label{eq:schrodingerbridge}
H(\ccP \mid \ccW) = \int \log\left(\frac{d\ccP}{d\ccW} \right) d\ccP
\end{equation}
over all probability measures that are absolutely continuous with respect to $\ccW$ and have the given particle distributions as marginals.
The Schrödinger bridge can be constructed from the solution to a certain system of equations, called the Schrödinger system.
A space and time discrete Schrödinger bridge problem for Markov chains is analysed in \cite{pavon2010discrete, Georgiou2015discreteSB,chen2017robust}.
\subsection{Optimal mass transport}\label{subsec:OMT_intro}
Another recently established connection of Schrödinger bridges is to the problem of optimal mass transport (OMT) \cite{chen2016relation, chen2016hilbert, leonard2013schrodinger, Mikami2004}.
As the diffusion coefficient $\epsilon$ in \eqref{eq:schrodinger_final_distribution} approaches $0$, the solution to the Schrödinger bridge problem tends to the solution to a corresponding optimal mass transport problem\cite{leonard2013schrodinger}.
Moreover, the Schrödinger bridge formulation is a regularization of OMT, as it is strictly convex and therefore guarantees a unique solution.

We introduce a discretized formulation of the OMT problem. For an extensive discussion of OMT see, e.g., \cite{villani2008optimal}. Consider a discretization $\left\{x_1,\dots,x_n\right\}$ of a compact space $X$ and two distributions $\mu_0,\mu_1\in\RR^n$ defined on this discretization. Given a cost matrix $C=[c_{ij}]_{i,j=1}^n$, where $c_{ij}$ denotes the cost of transporting a unit mass from point $x_i$ to $x_j$, we seek a transport plan $M=[m_{ij}]_{i,j=1}^n$, where $m_{ij}$ denotes the amount of mass being transported from $x_i$ to $x_j$, that minimizes the total transportation cost $\trace{C^T M}$ between the two distributions, i.e., the transport plan is required to satisfy $M\ett=\mu_0$ and $M^T\ett=\mu_1$, where $\ett$ denotes an $n\times 1$-vector of ones.

Solving this linear program is computationally expensive for large $n$. It was therefore proposed to regularize the problem by introducing a Kullback-Leibler divergence term (sometimes called entropy term) to the objective \cite{cuturi2013sinkhorn}. 
\begin{definition}
Let $p$ and $q$ be two nonnegative vectors or matrices of the same dimension. The Kullback-Leibler (KL) divergence between $p$ from $q$ is defined as 
\begin{equation*}
H(p|q) :=
 \sum_{i} p_i \log\left( \frac{p_i}{q_i} \right)
\end{equation*}
where $0\log 0$ is defined to be $0$. Note that $H(p|q)$ is jointly convex over $p, q$. See, e.g., \cite{cover2012elements} for more properties and interpretation of the KL divergence.
\end{definition}
The discretized and regularized OMT problem then reads 
\begin{equation} \label{eq:omt_reg}
\begin{aligned} 
  \minwrt[M\in\mathbb{R}^{n\times n}] \quad & \textrm{trace} \left(C^T M\right) + \epsilon H(M|\ett_{n\times n}) \\
\text{subject to} \quad &  M \mathbf{1} = \mu_0, \quad  M^T \mathbf{1} = \mu_1,
\end{aligned}
\end{equation}
where $\epsilon>0$ is a regularization parameter and $\ett_{n\times n}$ denotes an $n\times n$-matrix of ones. The solution to this problem may be found by Sinkhorn iterations, which correspond to the fixed point iteration for the Schrödinger system in \cite{chen2016hilbert}.
\section{Particle dynamics over a Markov chain}
Consider a cloud of $N$ particles, where each particle is evolving according to a Markov chain as described in Section~\ref{ssec:HMM}. Let the vectors $\mu_t \in \RN^{n}$ describe the particle distributions at time $t\in \{0,1\}$, where the $i$-th element $(\mu_t)_i$ denotes the number of particles in state $X_i$ at time $t$. In analogy to the OMT framework, we define the mass transfer matrix $M=[m_{ij}]_{i,j=1}^n$, where $m_{ij}$ denotes the number of particles that transit from state $X_i$ to state $X_j$. Note that the mass transport matrix satisfies $M \ett=\mu_0$ and $M^T \ett=\mu_1$. 

The state transition matrix $A=[a_{ij}]_{i,j=1}^n$ contains the particle transition probabilities. Thus, given the initial state $\mu_0$, the probability of a mass transfer matrix $M$ is
\begin{equation} \label{eq:probabilityM}
P_{\mu_0,A}( M ) = \prod_{i=1}^{n} \left( \binom{(\mu_0)_i}{m_{i1},m_{i2},\dots,m_{in}} \prod_{j=1}^{n} a_{ij}^{m_{ij}} \right),
\end{equation}
where $\binom{\cdot}{\cdot, \ldots, \cdot}$ denotes a multinomial coefficient. The expected distribution at time $t=1$ is then given by $E(\mu_1|\mu_0) = A^T \mu_{0}$. If $\mu_1$ is observed to be different from $A^T \mu_{0}$ a discrete version of the Schrödinger bridge problem can be solved (see Section~\ref{ssec:bridge}). That is, to find the matrix $M$ that maximizes $P_{\mu_0,A}( M )$ subject to that the constraint $M^T\ett=\mu_1$ on the final marginal is satisfied.

If the number of particles is large, then the log-likelihood of \eqref{eq:probabilityM} can be approximated in terms of a KL divergence.
\begin{proposition} \label{thm:loglikelihood}
Given $A$, let $\mu^{(N)}_0\in \RN^{n}$ be a sequence of distributions with $N$ particles, and $M^{(N)}\in \RN^{n\times n}$ be a sequence of mass transfer matrices such that $M^{(N)}\mathbf{1} = \mu^{(N)}_0$ and $\supp(M^{(N)}) \subseteq \supp(\diag(\mu_0^{(N)}) A)$.
Then there exists a constant $C>0$ such that for all $N$ it holds that
\begin{equation*}
\begin{split}
&  \left| \log \left( P_{\mu_0^{(N)}\!,\, A}(M^{(N)}) \right) + H \left( M^{(N)} \, \big| \, \diag(\mu_0^{(N)})A  \right) \right| \\
& \phantom{\leq} \leq C\log(N).
\end{split}
\end{equation*}
\end{proposition}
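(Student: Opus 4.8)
The plan is to expand both quantities in closed form, observe a large exact cancellation, and then control what remains with Stirling's formula. First I would write the log-likelihood using the definition of the multinomial coefficient,
\[
\log P_{\mu_0,A}(M) = \sum_{i} \log\big((\mu_0)_i!\big) - \sum_{i,j}\log(m_{ij}!) + \sum_{i,j} m_{ij}\log a_{ij},
\]
and expand the divergence, using the row-sum identity $\sum_j m_{ij} = (\mu_0)_i$ (from $M\ett = \mu_0$) to rewrite $\sum_{i,j} m_{ij}\log((\mu_0)_i) = \sum_i (\mu_0)_i \log((\mu_0)_i)$, giving
\[
H\big(M \,|\, \diag(\mu_0)A\big) = \sum_{i,j} m_{ij}\log m_{ij} - \sum_i (\mu_0)_i\log\big((\mu_0)_i\big) - \sum_{i,j} m_{ij}\log a_{ij}.
\]
Throughout I use the conventions $0!=1$ and $0\log 0 = 0$; the hypothesis $\supp(M)\subseteq\supp(\diag(\mu_0)A)$ is exactly what guarantees $a_{ij}>0$ and $(\mu_0)_i>0$ whenever $m_{ij}>0$, so that both expressions are finite and the $\log a_{ij}$ terms are well defined.

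Adding the two expressions, the $\sum_{i,j} m_{ij}\log a_{ij}$ terms cancel identically, leaving only factorial terms and $x\log x$ terms. Next I would invoke Stirling's bound in the form $\log(k!) = k\log k - k + r(k)$, where $r(0)=0$ and $|r(k)| \le D\log(k+2)$ for every integer $k\ge 0$ and some absolute constant $D$. Substituting this for every factorial and collecting terms, the leading pieces $\sum_i (\mu_0)_i\log((\mu_0)_i)$ and $\sum_{i,j} m_{ij}\log m_{ij}$ cancel against the corresponding $x\log x$ terms, while the linear parts combine into $-\sum_i (\mu_0)_i + \sum_{i,j} m_{ij} = -N + N = 0$, again by the marginal constraint $M\ett=\mu_0$. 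The entire expression therefore collapses to the pure Stirling residual
\[
\log P_{\mu_0,A}(M) + H\big(M\,|\,\diag(\mu_0)A\big) = \sum_i r\big((\mu_0)_i\big) - \sum_{i,j} r(m_{ij}).
\]

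Finally I would estimate this residual. Since the ensemble has $N$ particles, every entry satisfies $(\mu_0)_i \le N$ and $m_{ij} \le (\mu_0)_i \le N$, so each term obeys $|r(\cdot)| \le D\log(N+2)$, and there are at most $n + n^2$ of them, yielding $|\log P + H| \le (n+n^2)D\log(N+2)$, which is $O(\log N)$ uniformly in $N$ and gives the constant $C$ (depending only on $n$, after absorbing the small-$N$ regime, where the bound is verified directly). I do not expect any single estimate to be hard; the real work is the bookkeeping that makes all the $O(N\log N)$ and $O(N)$ contributions cancel \emph{before} the error terms are estimated. The two structural ingredients must be used precisely where they enter: the vanishing of the linear terms relies on $M\ett=\mu_0$, and the finiteness of every logarithm relies on the support hypothesis, so the main obstacle is ensuring these assumptions are invoked at exactly the right places rather than grinding through the algebra.
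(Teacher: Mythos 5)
Your proof is correct and takes essentially the same route as the paper's: both expand the multinomial log-likelihood via Stirling's formula and use the row-sum constraint $M\ett = \mu_0$ so that the leading $x\log x$ and linear terms cancel against the KL divergence, leaving only $O(n^2)$ Stirling residuals, each of size $O(\log N)$. The only difference is organizational --- the paper runs the argument as two separate one-sided bounds on $\log P_{\mu_0,A}(M)$, while you first derive an exact identity and then bound the residuals, which is marginally cleaner (it even makes the $N=1$ case, where the claimed bound forces the left-hand side to vanish exactly, transparent).
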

\vspace{4pt}
\begin{proof}
See appendix.
\end{proof}

Proposition \ref{thm:loglikelihood} implies that for sequences $\mu^{(N)}_0$ and $M^{(N)}$ satisfying the assumptions, if
\begin{equation*}
\frac{1}{N}\mu^{(N)}_0\to \bar \mu_0 \; \mbox{ and } \; \frac{1}{N}M^{(N)}\to \bar M  
\end{equation*}
as  $N\to \infty$, then
\begin{equation*}
 \frac{1}{N}\log \left( P_{\mu_0^{(N)}\!,\, A}(M^{(N)}) \right) \to - H \left( \bar M \, \big| \, \diag(\bar \mu_0)A  \right)
\end{equation*}
as  $N\to \infty$. 
This means that the KL divergence approximates the log-likelihood of $P_{\mu_0, A}(M)$ with increasing accuracy as the number of particles increases. We write this informally as
\begin{equation*}
	P_{\mu_0,A}(M) \sim e^{ - H\left(M | \diag(\mu_0)A\right)}.
\end{equation*}
In terms of large deviation theory, we thus interpret $H(\cdot|\diag(\mu_0)A)$ as the rate function for $P_{\mu_0,A}(\cdot)$. In fact, Proposition~\ref{thm:loglikelihood} can also be derived from a large deviation principle (see, e.g., \cite[Ch.~2.1.1]{dembo2009large}). 

For systems with many particles, we may therefore formulate the problem of finding the most likely mass transfer matrix $M$ between distributions $\mu_0$ and $\mu_1$ with underlying state transition matrix $A$ as the convex optimization problem
\begin{equation} \label{eq:KL_optimization}
\begin{aligned}  \minwrt[M\in\mathbb{R}^{n\times n}] \quad & H \left(M\,|\,\diag(\mu_0)A\right) \\
\text{subject to} \quad &  M \mathbf{1} = \mu_0,\quad M^T \mathbf{1} = \mu_1.
\end{aligned}
\end{equation}
\begin{remark}\label{rem:KL_OMT}
Let $A$ and $\mu_0$ be strictly positive. With the cost matrix
\begin{equation*}
C= -\epsilon \log(\diag(\mu_0)A),
\end{equation*}
the entropy regularized OMT problem \eqref{eq:omt_reg} is equivalent to problem \eqref{eq:KL_optimization}. Note that entropy regularized OMT problems have previously been solved by formulating them in terms of KL-projection problems \cite{benamou2015bregman}.
\end{remark}
\subsection{Connection to Schrödinger bridges}
We note that given the prior distribution $\mu_0$, the objective in \eqref{eq:KL_optimization} may be written as
\begin{equation*}
H(M \mid A) - H(\mu_0 \mid \ett) 
\end{equation*}
where the second term is constant. Hence, if we associate $A$ and $M$ with the measures $d\ccW$ and $d\ccP$ in \eqref{eq:schrodingerbridge}, the problem in Proposition \ref{thm:loglikelihood} relates to a discretized Schrödinger bridge. 
Our problem formulation indeed corresponds to the time and space discrete Schrödinger bridge from \cite{pavon2010discrete}.
To see this, consider a Markov chain of length $T$. Using Proposition~\ref{thm:loglikelihood}, knowing the marginals $\mu_0$ and $\mu_T$, we can find the most likely evolution of the particles between them as the solution to
\begin{equation} \label{eq:opt_markov_chain}
\begin{aligned}
	\minwrt[M_{[1:T]}, \mu_{[1:T-1]}] \ & \sum_{t=1}^{T} H( M_t \,|\,\diag(\mu_{t-1})A) \\
	\text{subject to } \  &  M_t \mathbf{1} = \mu_{t-1}  , \ \  M_t^T \mathbf{1} = \mu_{t}  ,\\
	& \text{for } \  t=1,\dots,T.
\end{aligned}
\end{equation}
Note that for a nonnegative matrix $M_t$ and strictly positive marginal $\mu_{t-1}$, the first constraint asserts that there is a row-stochastic matrix $\bar M_t$ such that $M_t = \diag(\mu_{t-1}) \bar M_t$.
Plugging this expression for the matrices $M_t$ into \eqref{eq:opt_markov_chain} gives
\begin{equation} \label{eq:discrete_schrodinger_bridge}
\begin{aligned}
	\minwrt[\bar M_{[1:T]}, \mu_{[1:T-1]}] \  & \sum_{t=1}^{T} \sum_{i} (\mu_{t-1})_i H \left(  (\bar M_t)_{i \cdot} ,A_{i \cdot}\right)  \\
	\text{subject to } \ & \bar M_t \mathbf{1} = \ett ,\ \  \mu_{t} = \bar M_t^T \mu_{t-1},\\
	& \text{for } \  t=1,\dots,T.
\end{aligned}
\end{equation}
Here $A_{i \cdot}$ denotes the $i$-th row of $A$. This is precisely the formulation of a Schrödinger bridge over a Markov chain from \cite[eq.~(24)]{pavon2010discrete} with time invariant transition probabilities.
In \cite{pavon2010discrete} it is shown that a unique solution to a corresponding Schrödinger system exists if $\mu_T$ is a strictly positive distribution and all elements are strictly positive 
in the matrix $A$ raised to the power $T$.
The solution to the Schrödinger system may be obtained by a fixed point iteration \cite{Georgiou2015discreteSB}, which is linked to the Sinkhorn iterations for entropy regularized OMT problems, cf. Section~\ref{subsec:OMT_intro}. 
 
We note that the optimization problem \eqref{eq:discrete_schrodinger_bridge} is non-convex and will thus work with the formulation \eqref{eq:opt_markov_chain} in the remaining part of this article.
\section{Particle dynamics over hidden Markov chain}
In this section, we extend our framework to the setting of a hidden Markov chain. The initial marginal distribution is assumed to be known.
In case
the hidden states are linked to the observations by a deterministic linear mapping, they may be estimated in a similar fashion as in \cite{elvander2018tracking}.
Here instead, we consider the non-deterministic case where the available observations emerge from the hidden distributions through an observation probability matrix $B\in\RR^{n\times m}$.

Equivalently to the mass transfer plans $M$, define the observation matrix $D\in\RN^{n\times m}$, with entries $d_{jk}$ denoting the number of particles that are in hidden state $X_j$ and observed in state $Y_k$.
Given a hidden state $\mu$, the probability for any observation matrix $D$ is given by $P_{\mu,B}(D)$ as defined in \eqref{eq:probabilityM}.
Hence, the large deviation result in Proposition \ref{thm:loglikelihood} holds for $D$ with rate function $H( \cdot | \diag(\mu) B)$.

Given an initial distribution $\mu_0 \in \RN^{n}$ and a set of measurements $\Phi_1,\dots,\Phi_T \in \RN^m$, we seek the most likely set of matrices $M_1,\dots,M_T$ and $D_1,\dots,D_T$ such that for some set of hidden distributions $\mu_1,\dots,\mu_T$ it holds that
\begin{equation} \label{eq:constraints_MD}
\begin{aligned}
& M_t\ett=\mu_{t-1},\ \ M_t^T\ett=\mu_t,\\
& D_t\ett = \mu_t,\qquad D_t^T \ett =\Phi_t,\quad \text{for }  t=1,\dots,T.
\end{aligned}
\end{equation}
This model is illustrated in Figure~\ref{fig:HMM_model}.
The maximum likelihood solution is obtained by solving the optimization problem
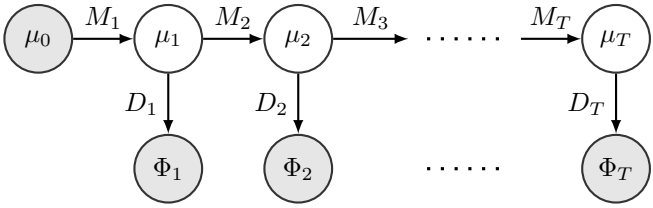
\begin{figure}[tb]
 \centering
 \begin{tikzpicture}
  \tikzstyle{main}=[circle, minimum size = 9mm, thick, draw =black!80, node distance = 8mm]
  \node[main,fill=black!10] (mu0) {$\mu_0$};
  \node[main] (mu1) [right=of mu0]{$\mu_1$};
  \node[main] (mu2) [right=of mu1] {$\mu_2$};
  \node[] (mu3) [right=of mu2] {};
  \node[] (muTm1) [right=of mu3] {};  
  \node[main] (muT) [right=of muTm1] {$\mu_T$};
  
  \node[main,fill=black!10] (phi1) [below=of mu1] {$\Phi_1$};
  \node[main,fill=black!10] (phi2) [right=of phi1,below=of mu2] {$\Phi_2$};
  \node[] (phi3) [right=of phi2] {};
  \node[] (phiTm1) [right=of phi3] {};
  \node[main,fill=black!10] (phiT) [right=of phi3,below=of muT] {$\Phi_T$};

  \draw[->, -latex, thick] (mu0) -- node[above] {$M_1$} (mu1); 
  \draw[->, -latex, thick] (mu1) -- node[above] {$M_2$} (mu2);
  \draw[->, -latex, thick] (mu2) -- node[above] {$M_3$} (mu3); 
  \draw[loosely dotted, very thick] (mu3) -- (muTm1); 
  \draw[->, -latex, thick] (muTm1) -- node[above] {$M_T$} (muT); 
  
  \draw[->, -latex, thick] (mu1) -- node[left] {$D_1$} (phi1);
  \draw[->, -latex, thick] (mu2) -- node[left] {$D_2$} (phi2);
  \draw[->, -latex, thick] (muT) -- node[left] {$D_T$} (phiT);
  \draw[loosely dotted, very thick] (phi3) -- (phiTm1); 
 \end{tikzpicture}
 \caption{Illustration of the hidden Markov model corresponding to \eqref{eq:constraints_MD}.}
\label{fig:HMM_model}
\vspace{-10pt}
\end{figure}
\begin{equation*}
\maxwrt[M_{[1:T]}, D_{[1:T]}, \mu_{[1:T]}] \  \prod_{t=1}^T P_{\mu_{t-1},A} ( M_t ) P_{\mu_{t},B} ( D_t ) 
\end{equation*}
subject to \eqref{eq:constraints_MD}.
From Proposition \ref{thm:loglikelihood} it follows that
\begin{equation*}
\log \left( \prod_{t=1}^TP_{\mu_{t-1},A} ( M_t ) P_{\mu_{t},B} ( D_t ) \right)
\end{equation*}
can be approximated by 
\begin{equation}
\label{eq:HMM_objective} 
 - \sum_{t=1}^T 
 \Big(  
 H(  M_t \mid \diag(\mu_{t-1})A)  +  H(  D_t \mid \diag(\mu_{t})B) 
 \Big)
\end{equation}
when the number of particles is large.
We thus estimate the matrices $M_1,\dots,M_T$, $D_1,\dots,D_T$ and the hidden states $\mu_1,\dots,\mu_T$ by maximizing \eqref{eq:HMM_objective} subject to the constraints \eqref{eq:constraints_MD},
i.e., by solving
\begin{equation}\label{eq:KL_measurement}
\begin{aligned}
\minwrt[M_{[1:T]}, D_{[1:T]}, \mu_{[1:T]}] \ & \sum_{t=1}^{T} H(  M_t \ | \ \diag(\mu_{t-1})A) \\
& + \sum_{t=1}^T H( D_{t} \ | \ \diag(\mu_t) B ) \\
\text{subject to} \quad & M_t \ett = \mu_{t-1} ,\quad M_t^T \ett = \mu_{t}  \\
& D_{t} \ett = \mu_t , \qquad D_{t}^T \ett = \Phi_{t}  \\
& \text{for } t=1,\dots, T.
\end{aligned}
\end{equation}
\begin{remark}
The modeling assumptions leading to this optimization problem require knowledge of the initial distribution as well as the transition and observation probabilities for the hidden Markov model. However, in practice these are typically not known exactly.
In the examples in Section~\ref{sec:simulation}, we illustrate that the estimation is accurate even when there are significant model errors. The generalization of the proposed method to the case where the initial distribution is not known will be discussed in a forthcoming paper.
\end{remark}

\subsection{Computational method}
In this section we develop a numerical method to solve problem
\eqref{eq:KL_measurement}.
To this end, recall from Remark~\ref{rem:KL_OMT} the connection between KL-minimization problems and entropy regularized OMT problems. The latter can be efficiently solved by Sinkhorn iterations\cite{cuturi2013sinkhorn}, which in turn are equivalent to a block coordinate ascent in a dual problem\cite{ringh2017sinkhorn}. Motivated by this, we choose to follow a similar approach. 
\begin{proposition} \label{thm:method}
Let $u_1\in \mR^n$ and $v_t\in \mR^m$, for $t=1,\ldots, T$, be positive initial values, and
iterate the following steps:
\begin{enumerate}
\item[(1)]
$u_1 = e \ett ./ (A w_1)$
\item[(2)]
$v_{t} = e \Phi_{t} ./ \left( B^T (y_t \odot (A w_{t+1})) \right)$
for  $t=1,\dots,T$,
\end{enumerate}
where in each step and for each $t$ in step (2), the vectors $y_t$ and $w_t$ are recursively defined as
\begin{equation*}
\begin{aligned}
& y_1 = A^T (\mu_0 \odot u_1), \\
& y_t = A^T \left(y_{t-1} \odot (Bv_{t-1}) \right), \quad t=2,\ldots,T
\end{aligned}
\end{equation*}
and
\begin{equation*}
\begin{aligned}
& w_T = B v_{T} \\
& w_t = \left( B v_{t} \right) \odot (Aw_{t+1}), \quad t=1, \ldots, T-1.
\end{aligned}
\end{equation*}
In the limit point of the iteration, the estimates for the hidden marginals are then recursively constructed, starting from the known $\mu_0$, as
\begin{equation*}
\mu_t = \diag(w_t)A^T \left(\mu_{t-1} ./ (A w_t) \right), \quad t=1,\dots,T.
\end{equation*}
Furthermore, the corresponding mass transfer matrices are given by
\begin{align*}
M_t &= \frac{1}{e} \diag( \mu_{t-1} \odot u_t ) \, A \, \diag( w_t),\\
D_{t} &= \frac{1}{e} \diag( \mu_t \odot x_{t} ) \, B \, \diag( v_{t}),
\end{align*}
where
\begin{equation*}
\begin{aligned}
& x_{t} = e \ett ./ \left( B v_{t} \right)\\
& u_t = e \ett ./ \left( A w_{t} \right)
\end{aligned}
\end{equation*}
for $t=1,\dots,T.$

\end{proposition}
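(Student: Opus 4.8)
The plan is to characterize the optimal solution of the convex program \eqref{eq:KL_measurement} through Lagrangian duality and first-order (KKT) conditions, and then to recognize the iteration (1)--(2) as a block coordinate ascent on the dual whose fixed point reproduces exactly those conditions. Since the objective is jointly convex in $(M_{[1:T]}, D_{[1:T]}, \mu_{[1:T]})$ and the constraints \eqref{eq:constraints_MD} are linear, under strict positivity of the data (so that a relative-interior feasible point exists, as in the Schr\"odinger-system conditions of \cite{pavon2010discrete}) the KKT conditions are both necessary and sufficient for optimality. First I would attach multipliers $\alpha_t,\beta_t\in\mR^n$ to the two constraints on $M_t$, and $\gamma_t\in\mR^n$, $\delta_t\in\mR^m$ to the two constraints on $D_t$, and form the Lagrangian $\mathcal L$.

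Next I would compute stationarity with respect to the matrix variables. Because $\frac{d}{dx}(x\log x)=\log x+1$, differentiating $\mathcal L$ in $(M_t)_{ij}$ and $(D_t)_{jk}$ and setting the result to zero gives the factorized forms $M_t=\frac1e\diag(\mu_{t-1}\odot e^{\alpha_t})A\diag(e^{\beta_t})$ and $D_t=\frac1e\diag(\mu_t\odot e^{\gamma_t})B\diag(e^{\delta_t})$. Writing $u_t=e^{\alpha_t}$, $w_t=e^{\beta_t}$, $x_t=e^{\gamma_t}$, $v_t=e^{\delta_t}$ (all elementwise) recovers the stated expressions for $M_t$ and $D_t$. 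Imposing the row-sum constraints $M_t\ett=\mu_{t-1}$ and $D_t\ett=\mu_t$ then forces $u_t=e\ett./(Aw_t)$ and $x_t=e\ett./(Bv_t)$, exactly the auxiliary definitions in the proposition, while the column-sum constraint $M_t^T\ett=\mu_t$ yields the marginal update $\mu_t=\diag(w_t)A^T(\mu_{t-1}./(Aw_t))$.

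The crux is the stationarity with respect to the free marginals $\mu_t$, which enter both the reference measures $\diag(\mu_t)A$, $\diag(\mu_t)B$ and the linear constraint terms. Differentiating and using primal feasibility to replace $(M_{t+1}\ett)_j/(\mu_t)_j$ and $(D_t\ett)_j/(\mu_t)_j$ by $1$, I obtain the coupling $u_{t+1}\odot w_t\odot x_t=e^2\ett$ for $t<T$ and $w_T\odot x_T=e\ett$ at the endpoint. Substituting $u_{t+1}=e\ett./(Aw_{t+1})$ and $x_t=e\ett./(Bv_t)$ collapses these to the backward recursion $w_t=(Bv_t)\odot(Aw_{t+1})$ with $w_T=Bv_T$. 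Feeding this recursion into $\mu_t=\diag(w_t)A^T(\mu_{t-1}./(Aw_t))$ and inducting from the known $\mu_0$ gives both the forward recursion $y_t$ and the smoothing identities $\mu_t=\frac1e\,w_t\odot y_t$ and $\mu_t\odot x_t=y_t\odot(Aw_{t+1})$; the last observation constraint $D_t^T\ett=\Phi_t$ then reads $v_t=e\Phi_t./\!\big(B^T(y_t\odot(Aw_{t+1}))\big)$, which is step (2), while the initial constraint $M_1\ett=\mu_0$ gives step (1).

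Finally I would observe that the reconstruction formulas automatically satisfy $M_t\ett=\mu_{t-1}$, $M_t^T\ett=\mu_t$, $D_t\ett=\mu_t$ and the $\mu_t$-stationarity for any choice of potentials, so that only $M_1\ett=\mu_0$ and $D_t^T\ett=\Phi_t$ remain to be driven to satisfaction; this is precisely what steps (1) and (2) do, making the iteration a block coordinate ascent in the dual over the remaining free potentials $u_1$ and $v_{1},\dots,v_T$. At a fixed point all KKT conditions hold simultaneously, and convexity makes the reconstructed $(M_t,D_t,\mu_t)$ the unique minimizer of \eqref{eq:KL_measurement}. I expect the main obstacle to be twofold: correctly carrying out the $\mu_t$-stationarity (the nonlinear appearance of $\mu_t$ in the reference measures is what produces the forward--backward coupling and is easy to get wrong), and establishing that the iteration actually possesses a limit point; the latter I would import from the convergence theory for Schr\"odinger systems and Sinkhorn-type iterations \cite{pavon2010discrete, Georgiou2015discreteSB, ringh2017sinkhorn} rather than prove from scratch.
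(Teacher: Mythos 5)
Your proposal is correct in substance and follows the same overall route as the paper: form the Lagrangian of \eqref{eq:KL_measurement}, minimize explicitly over $M_t$ and $D_t$ to obtain the factorized forms $M_t = \frac{1}{e}\diag(\mu_{t-1}\odot u_t)\,A\,\diag(w_t)$ and $D_t = \frac{1}{e}\diag(\mu_t\odot x_t)\,B\,\diag(v_t)$, eliminate the auxiliary potentials, and read steps (1)--(2) as block coordinate ascent in the dual over $u_1$ and $v_1,\dots,v_T$. Where you differ is in how the relations $u_t = e\ett./(Aw_t)$, $x_t = e\ett./(Bv_t)$, $w_T = Bv_T$, $w_t = (Bv_t)\odot(Aw_{t+1})$ are obtained: you derive them from KKT stationarity in $\mu_t$ combined with primal feasibility, i.e., as identities holding \emph{at the optimum}, whereas the paper keeps $\mu_{[1:T]}\geq 0$ as primal variables, observes that the modified Lagrangian is linear in them so that boundedness of the dual functional forces the inequality constraints \eqref{eq:constraints1}--\eqref{eq:constraints2} on the potentials, and then obtains the same relations as slack-maximizing/tight choices valid for \emph{every} fixed $(u_1, v_{[1:T]})$, not only at the optimum. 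This distinction is exactly what your convergence step needs: your KKT argument shows that a fixed point of the iteration reconstructs the optimal solution, but identifying the iteration as genuine block coordinate ascent (so that standard convergence theory applies and limit points exist and are optimal) requires knowing that the eliminated potentials are optimal for arbitrary values of the free blocks, making the reduced dual an unconstrained, continuously differentiable concave function of $(u_1, v_{[1:T]})$ whose exact block maximizers are precisely updates (1) and (2); the paper gets this for free from its elimination and then cites \cite[Prop.~2.7.1]{Bertsekas99}. In your version this must be supplied separately, e.g., by an envelope-type computation checking that the gradient of the reduced dual with respect to $\log v_t$ equals $\Phi_t - D_t^T\ett$ for your reconstructed $D_t$ (it does, by the smoothing identity $\mu_t\odot x_t = y_t\odot(Aw_{t+1})$ you state), after which the convergence you propose to import from \cite{ringh2017sinkhorn, Georgiou2015discreteSB} is indeed just the standard block coordinate ascent result. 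With that one step made explicit, your proof is complete and essentially equivalent to the paper's.
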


\begin{proof}
See appendix.
\end{proof}

It is worth noting that intermediate results of the vectors $y_t$ and $w_t$ may be stored, such that the update of $u_1$ requires only one matrix-vector multiplications with $A$, and the update of $v_t$, for any $t=1,\dots,T$, involves two multiplications with $B$ and one with $A$. One iteration sweep, i.e. one update of $u_1$ and the set $v_t$, for $t=1,\dots,T$, thus requires $\mathcal{O}(Tn\max(n,m))$ operations.

\section{Simulations}\label{sec:simulation}
\subsection{Particle dynamics}
Consider a cloud of 1000 particles evolving from an initial distribution $\mu_0\in\RR^n$ with $n=100$ states. The particles transition matrix is given by $\tilde A\in\RR^{n\times n}$ with elements
\begin{equation*}
\tilde a_{ij} \sim \exp\left( \frac{1}{2\sigma_{\tilde a}^2}(i-j-1)^2 \right), \quad \mbox{ with }\sigma_{\tilde a}=0.5,
\end{equation*}
which corresponds to a discretization of a normal distribution $\ccN(1,0.5)$, and thus induces a drift on the dynamics of the cloud. The true dynamics of the particles are assumed to be unknown and instead modeled by a transition matrix $A\in\RR^{n\times n}$ with elements
\begin{equation*}
a_{ij} \sim \exp\left( \frac{1}{2\sigma_a^2}(i-j)^2 \right), \quad \mbox{ with } \sigma_a=2.
\end{equation*}
At each time instance, the particles are observed in $m=5$ bins, where the observation probability matrix $B\in\RR^{n\times m}$ has elements
\begin{equation*}
b_{ij} \sim \exp\left( \frac{1}{2 \sigma_b^2}\left( j-\frac{i+10}{20}\right)^2\right), \quad \mbox{ with }\sigma_b=0.5
.
\end{equation*}
We estimate the flow of the particles and hidden particle distributions solving problem \eqref{eq:KL_measurement} for $T=50$ time instances with the method proposed in Proposition \ref{thm:method}. One estimate is formed using the true initial distribution $\mu_0$ as a prior distribution, and for a second estimate we use a uniform prior.

Figure \ref{fig:simulation_particles} shows the true hidden particle cloud, the corresponding observations, and the two estimates. With full information of the initial states available, the proposed method provides a good estimate of the hidden states despite discrepancies between the true and assumed transition matrices $\tilde A$ and $A$. In the case of no prior information, i.e., the prior distribution is set to be uniform, we see that the estimate converges to the estimate with true prior within a few time steps. This indicates that the proposed method is robust to modeling uncertainties and lack of information in the initial state. 
\subsection{Tracking ensembles over a network}
In this example we consider the problem of tracking a number of indistinguishable agents over a network given measurements from sensors that are distributed around the network. This is inspired by \cite{danielsson2018multi}, where an HMM is used to estimate the flow of a crowd
in an urban environment based on observations generated when cell phones connect to Wi-Fi sensors.
The environment is modeled as a network of nodes and arcs, where the arcs represent walking paths in the area and the nodes are the intersections between the paths.

For this application, the optimization problem \eqref{eq:KL_measurement} needs to be extended to allow for multiple measurements. To this end,
let $\Phi_{st}$ be a set of observations from measurement unit $s$ at time point $t$, for $t=1, \dots, T$, and for $s = 1, \dots ,S$.
We obtain the maximum likelihood solution as the optimal solution to\footnote{This is a convex optimization problem which in principle can be solved with off-the-shelf solvers. In this example we use an efficient algorithm in the spirit of Proposition~\ref{thm:method}, but due to lack of space we defer the exact algorithm to a forthcoming paper.}
\begin{align}
\minwrt[M_{[1:T]}, D_{[1:T],[1:S]}, \mu_{[1:T]}] \ & \sum_{t=1}^{T} H(  M_t \ | \ \diag(\mu_{t-1})A) \nonumber \\
& + \sum_{t=1}^T \sum_{s=1}^S H( D_{st} \ | \ \diag(\mu_t) B_s ) \nonumber\\
\text{subject to} \quad & M_t \ett = \mu_{t-1} ,\quad M_t^T \ett = \mu_{t} \label{eq:KL_multi_measurement} \\
& D_{st} \ett = \mu_t , \qquad D_{st}^T \ett = \Phi_{st} \nonumber \\
& \text{for } t=1,\dots, T, \text{ and } s=1,\dots,S. \nonumber
\end{align}

\begin{figure}[tb]
 \centering
 \includegraphics[width=\columnwidth]{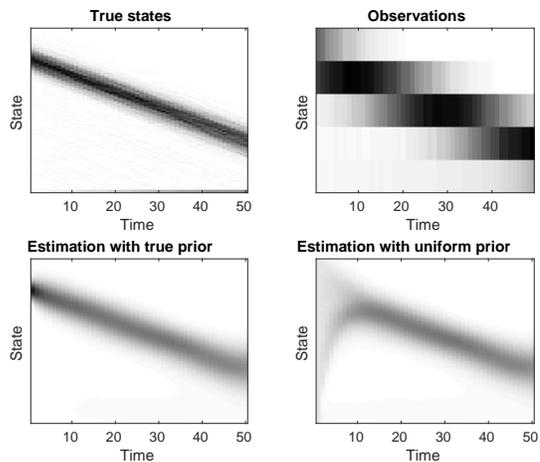}
 \caption{Particle cloud reconstructed from observations.}
\label{fig:simulation_particles}
\vspace{-10pt}
\end{figure}

\begin{figure}[tb]
\centering
\includegraphics[width=\columnwidth]{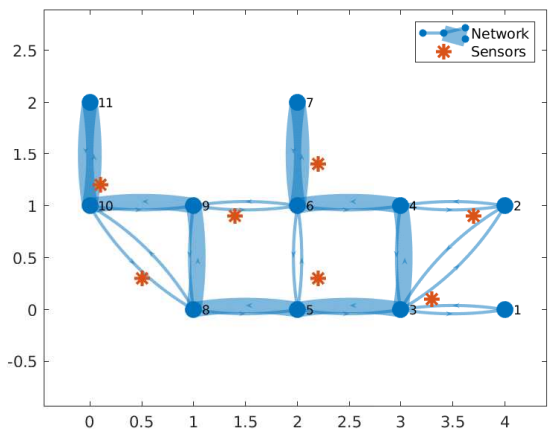}
\caption{Network and sensors.}
\label{fig:network_setting}
\vspace{-10pt}
\end{figure}
Consider a hidden Markov model where the states $X=\{X_1,\ldots, X_n\}$ are the edges in the directed graph $\mathcal{G}=(V,X)$, and where the edge $X_i=(V^{\rm in}_i,V^{\rm out}_i)$ goes from $V^{\rm in}_i\in V$ to $V^{\rm out}_i\in V$. In this example we will use the graph illustrated in Figure \ref{fig:network_setting}, consisting of 11 nodes and $n=28$ edges.
For the true model, transition probabilities are defined according to weights in the graph that represent which walking paths are preferred by the pedestrians. For the model used in the estimation we assume that this information is not known and use uniform weights. 
More specifically, the transition probabilities are given by
\begin{equation*}
\tilde{a}_{ij} =\!
\begin{cases}
0.5, &  \text{ if } j=i \\
\displaystyle{0.5 \,w_{ij}\! \left(\sum_{\{ k: V^{\rm in}_k = V^{\rm out}_i \}} w_{ik}\right)^{-1}} & \text{ if } V^{\rm in}_j = V^{\rm out}_i \\
0, & \text{ else,} 
\end{cases}
\end{equation*}
where $\{w_{ij}\}$ is a set of weights.
For the transition matrix $A$ used in the estimation we assign uniform weights $w_{ij}=1$, for all $(i,j)$ with $V^{\rm in}_j = V_i^{\rm out}$.
For the true transitions the weights are defined as
\begin{equation*}
w_{ij} = \begin{cases}
20,&  X_j \in \mathcal{W} $ and $ V_j^{\rm out}\neq V_i^{\rm in} \\
0, & V_j^{\rm out}= V_i^{\rm in} \\
1, & \text{ else,}
\end{cases}
\end{equation*}
for $(i,j)$ such that $V^{\rm in}_j = V_i^{\rm out}$, and
where $\mathcal{W}$ is the set of edges highlighted in Figure \ref{fig:network_setting}. Note that the second case implies that agents do not transition to the reverse edge in the next time step.

The agents are observed by a set of $S=7$ sensors located at the positions indicated in Figure \ref{fig:network_setting}.
The observation probability for an agent on edge $X_i$ to be detected by a given sensor $s$ is defined as $b^s_{i1} = \min(0.99, 2 e^{-5d} )$, where $d$ denotes the Euclidean distance between the location of $s$ and the midpoint of $X_i$. Consequently the probability of not being detected is $b^s_{i2}=1-b^s_{i1}$.

Given an initial distribution of 100 agents on the edge $(1,3)$, the flow and the measurements for the true ensemble are computed for $T=20$ time steps using the true transition and observation probabilities. 
Then we estimate the flow by solving the optimization problem  \eqref{eq:KL_multi_measurement}. The true and estimated particle distributions are compared for some time steps in Figure \ref{fig:network}, where the width of each edge is proportional to the number of agents on it. As can be seen in the figure the proposed method provides a good estimate also for this example. 
\begin{figure}[tb]
\centering
\includegraphics[width=\columnwidth]{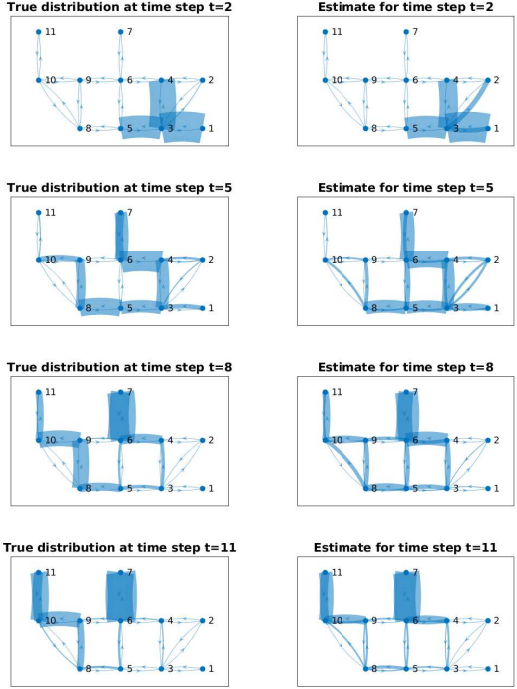}
\vspace{-30pt}
\caption{Ensemble flow over network.}
\label{fig:network}
\vspace{-10pt}
\end{figure}
\section{Conclusions and future directions}
In this work, we propose a method for estimating the flow of an ensemble of particles on a hidden Markov chain. The estimation, which is formulated as a maximum likelihood problem, can be recast as a convex optimization problem, for which we provide an efficient algorithm.

There are several natural directions in which this work can be extended. One restriction in this paper is that the number of agents are fixed and known, thus extensions with a variable number of particles, such as birth/death processes, could be of interest (cf. \cite{dawson1990schrodinger}). Furthermore, the model may be extended to continuous-state dynamics. Another natural direction is to study the connections to reciprocal processes. 
%

%
%
%
%
%
%
\appendix
\subsection{Proof of Proposition \ref{thm:loglikelihood}}
Let $\mu_0^{(N)}$ and $M^{(N)}$ be as described in the statement of the proposition. Moreover, let $\mathcal{Z}^{(N)} = \{ (i,j) \mid m^{(N)}_{ij} \neq 0 \}$, which is non-empty, and let $\mathcal{Z}^{(N)}_i = \{ j \mid (i,j) \in \mathcal{Z}^{(N)} \}$, which is non-empty if and only if $(\mu_0^{(N)})_i > 0$.
Furthermore, let $\mathcal{Y}^{(N)} = \{i \mid (\mu_0^{(N)})_i > 0\}$.
Then, using
Stirling's formula
\begin{equation*}
\sqrt{2\pi} n^{n-1/2} e^{-n} \leq n! \leq e n^{n-1/2} e^{-n},
\end{equation*}
see, e.g., \cite{robbins1955remark},
for $i \in \mathcal{Y}^{(N)}$ the multinomial coefficient in \eqref{eq:probabilityM} can be bounded from above by 
\begin{align*}
& \binom{(\mu_0^{(N)})_i}{m_{i1}^{(N)}, m_{i2}^{(N)}, \dots, m_{in}^{(N)}} \\
& \leq e^{-(|\mathcal{Z}^{(N)}_i| - 1)} \exp\bigg(\sum_{j \in \mathcal{Z}^{(N)}_i} m_{ij}^{(N)}-(\mu_0^{(N)})_i\bigg) \\
& \hspace{20pt} \cdot (\mu_0^{(N)})_i^{(\mu_0^{(N)})_i+\frac{1}{2}} \prod_{j \in \mathcal{Z}^{(N)}_i} (m_{ij}^{(N)})^{-(m_{ij}^{(N)} + \frac{1}{2})} \\
& \leq (\mu_0^{(N)})_i^{(\mu_0^{(N)})_i+\frac{1}{2}} \prod_{j \in \mathcal{Z}^{(N)}_i} (m_{ij}^{(N)})^{-(m_{ij}^{(N)} + \frac{1}{2})},
\end{align*}
where $|\mathcal{Z}^{(N)}_i|$ denotes the cardinality of the set, and where the second inequality follows from the fact that $\sum_{j \in \mathcal{Z}^{(N)}_i} m_{ij} = \sum_{j = 1}^n m_{ij} = (\mu_0)_i$, and that $e^{-(|\mathcal{Z}^{(N)}_i| - 1)} \leq 1$.
Thus, the log-likelihood of the probability for a transfer plan $M^{(N)}$ can be upper-bounded as follows:
\begin{align*}
& \log \left( P_{\mu_0^{(N)},A}(M^{(N)}) \right) \\
& \leq  \sum_{(i,j) \in \mathcal{Z}^{(N)}} \left( m_{ij}^{(N)} \log(a_{ij}) - \left( m_{ij}^{(N)} + \frac{1}{2} \right) \log(m_{ij}^{(N)}) \right)\\
  & \quad + \sum_{i \in \mathcal{Y}^{(N)}} \left( (\mu_0^{(N)})_i + \frac{1}{2} \right) \log( (\mu_0^{(N)})_i)\\
& =   \sum_{(i,j) \in \mathcal{Z}^{(N)}} \left( m_{ij}^{(N)} \log\left( \frac{(\mu_0^{(N)})_i a_{ij}}{m_{ij}^{(N)}} \right) \right)\\
  & \quad - \sum_{(i,j) \in \mathcal{Z}^{(N)}}\frac{1}{2} \log(m_{ij}^{(N)}) +  \sum_{i \in \mathcal{Y}^{(N)}} \frac{1}{2}  \log( (\mu_0^{(N)})_i) \\
& \leq - H\left( M^{(N)},\diag(\mu_0^{(N)})A\right) +  \frac{n}{2} \log(N),
\end{align*}
where the last inequality comes from the second-to-last expression by i) identifying the first term as the KL divergence, ii) noting that the second term is nonpositive, and iii) overestimating the third term by taking $(\mu_0^{(N)})_i = N$ for $i = 1, \ldots, n$.

Similarly, underestimating the multinomial coefficients gives that the log-likelihood can be bounded from below by
\begin{align*}
& \log \left( P_{\mu_0^{(N)},A}(M^{(N)}) \right) \\
& \geq   \sum_{(i,j) \in \mathcal{Z}^{(N)}} \left( m_{ij}^{(N)} \log\left( \frac{(\mu_0^{(N)})_i a_{ij}}{m_{ij}^{(N)}} \right) - \frac{1}{2} \log(m_{ij}^{(N)}) \right)\\
  & \quad +  \sum_{i \in \mathcal{Y}^{(N)}} \frac{1}{2}  \log( (\mu_0^{(N)})_i) - \frac{1}{2} n(n-1) \log(2\pi)\\
  & \geq - H\left( M^{(N)},\diag(\mu_0^{(N)})A\right) \\
 & \phantom{\geq} - \frac{1}{2}\left(n^2 + \frac{n(n-1) \log(2\pi)}{\log(N)}  \right) \log(N).
\end{align*}
By using the two inequalities, the result follows.
\subsection{Proof of Proposition \ref{thm:method}}
We solve \eqref{eq:KL_measurement} by a block coordinate ascent in the dual.
First note that since $\mu_0$, $A$, $B$, and $\Phi_{t}$ are all elementwise nonnegative, so will the optimal solution $M_{[1:T]}^*, D_{[1:T]}^*, \mu_{[1:T]}^*$ also be. We can therefore add the constraint $\mu_{[1:T]} \geq 0$ to \eqref{eq:KL_measurement} without changing the optimal solution.  
For this problem, we relax the constraints in \eqref{eq:KL_measurement} with corresponding dual variables $\lambda_{M_t},\nu_{M_t},\lambda_{D_{t}},\nu_{D_{t}}$. 
Let $\mathfrak{M} := M_{[1:T]}$, $\mathfrak{D} := D_{[1:T]}$,
and define the Lagrangian
\begin{equation*}
\begin{aligned}
& L(\mathfrak{M}, \mathfrak{D}, \mu_{[1:T]}, \lambda_{\mathfrak{M}},\nu_{\mathfrak{M}}, \lambda_{\mathfrak{D}}, \nu_{\mathfrak{D}}) \\
& \phantom{xxx} =  \sum_{t=1}^{T} \Bigg(  \sum_{ij}  m^t_{ij} \log\big(\frac{ m^t_{ij}}{\mu^{t-1}_i a_{ij}} \big) + \lambda_{M_t}^T (\mu_{t-1} - M_t \ett ) \\
& \phantom{xxx =} + \nu_{M_t} (\mu_{t} - M_t^T \ett) +   \sum_{ij}  d^{t}_{ij} \log \big( \frac{ d^{t}_{ij}}{\mu^{t}_i b_{ij}} \big) \\
& \phantom{xxx =} + \lambda_{D_{t}}^T (\mu_t - D_{t} \ett ) + \nu_{D_{t}} (\Phi_{t} - D_{t}^T \ett)  \Bigg). \\
\end{aligned}
\end{equation*}%
Minimizing this with respect to the matrices $M_t$ and $D_{t}$ gives explicit expressions for the optimal solution in terms of $\mu_{[0:T]}$ and the dual variables, i.e.,
\begin{align*}
M_t &= \frac{1}{e} \diag( \mu_{t-1} \odot u_t ) \, A \, \diag( w_t),\\
D_{t} &= \frac{1}{e} \diag( \mu_t \odot x_{t} ) \, B \, \diag( v_{t}),
\end{align*}
where $u_t = \exp(\lambda_{M_t})$, $w_t = \exp(\nu_{M_t})$, $x_{t} = \exp(\lambda_{D_{t}})$ and $v_{t} = \exp(\nu_{D_{t}})$, for $t=1,\dots,T$.
Plugging these into the Lagrangian, we get the modified Lagrangian
\begin{equation*}
\begin{aligned}
& L(\mu_{[1:T]},
u_{\mathfrak{M}}, w_{\mathfrak{M}}, x_{\mathfrak{D}}, v_{\mathfrak{D}})
 =  - \frac{1}{e} \sum_{t=1}^T ( \mu_{t-1} \odot u_t )^T A  w_t \\
& \phantom{x} - \frac{1}{e}  \sum_{t=1}^T  ( \mu_{t} \odot x_{t} )^T B  v_{t} + \log(u_{1})^T \mu_0 \\
& \phantom{x}   + \sum_{t=1}^{T-1} \mu_t^T \left( \log(u_{t+1}) + \log(w_t) + \log(x_{t}) \right) \\
& \phantom{x} + \mu_T^T ( \log(w_T) +  \log(x_{T})  ) + \sum_{t=1}^T \log(v_{t})^T \Phi_{t}.\\
\end{aligned}
\end{equation*}
Noting that since $\mu_{[1:T]}$ occurs linearly in $L$, for the modified dual functional $\inf_{\mu_{[1:T]} \geq 0} L(\mu_{[1:T]}, \lambda_{\mathfrak{M}},\nu_{\mathfrak{M}}, \lambda_{\mathfrak{D}}, \nu_{\mathfrak{D}})$ to be bounded from below, 
the corresponding factors need to be elementwise nonnegative.
In this case the corresponding terms will be zero when taking the infimum, and thus the dual problem is to maximize
\begin{equation}\label{eq:dual_objective}
- \frac{1}{e} (\mu_0 \odot u_1)^T A w_1   + \log(u_1)^T \mu_0 + \sum_{t=1}^T \log(v_{t})^T \Phi_{t} 
\end{equation}
subject to
\begin{equation}\label{eq:constraints1}
\begin{split}
- \frac{1}{e}  \diag(u_{t+1}) A  w_{t+1} - \frac{1}{e}  \diag(x_{t}) B  v_{t} &\\
 + \log(u_{t+1})  + \log(x_{t}) + \log(w_t) & \geq 0
\end{split}
\end{equation}
for $t=1,\dots,T-1$, and
\begin{equation}\label{eq:constraints2}
- \frac{1}{e} \diag(x_{T}) B  v_{T}   + \log(x_{T})+ \log(w_T) \geq 0.
\end{equation}
Note that neither the objective function \eqref{eq:dual_objective}, nor the first $T-1$ constraints \eqref{eq:constraints1} depend on $x_T$. Thus the optimal choice of $x_T$ is the one creating the most slack in the last constraint, i.e., the one maximizing the first two terms of  \eqref{eq:constraints2}.
This is achieved by $x_{T} = e \ett ./(B v_{T})$, and for this choice of $x_T$ the constraint 
\eqref{eq:constraints2} can be replaced by
\begin{equation}\label{eq:constraints3}
- \log(B  v_{T})  + \log(w_T) \geq 0.
\end{equation}
Similarly, the slack in \eqref{eq:constraints1} is maximized by selecting  
\begin{equation*}
\begin{aligned}
& x_{t} = e \ett ./ \left( B v_{t} \right), \quad t=1,\dots,T \\
& u_t = e \ett ./ \left( A w_{t} \right), \quad t=2,\dots,T.
\end{aligned}
\end{equation*}
and thus the constraints \eqref{eq:constraints1} can be replaced by
\begin{equation}\label{eq:constraints4}
- \log( A  w_{t+1}) - \log( B  v_{t})  + \log(w_t) \geq 0, 
\end{equation}
for $t=1,\ldots, T-1$. Next, note that if some of the constraints \eqref{eq:constraints3} and \eqref{eq:constraints4} are not fulfilled with equality, the objective function \eqref{eq:dual_objective} can be improved by increasing the values of the corresponding $v_t$. Therefore, in an optimal point we must have that
\begin{equation*}
\begin{aligned}
w_T & =  B v_{T} \\
w_t & = \left(B v_{t}\right) \odot (A w_{t+1}) \quad \text{for } t=T-1,\dots,1.
\end{aligned}
\end{equation*}
This gives an expression for $w_1$, that depends on $v_t$, $t = 1, \ldots, T$. Inserting this into the objective \eqref{eq:dual_objective} leads to an unconstrained problem that depends only on $u_1$ and $v_{t}$, $t=1,\dots,T$, which we solve using block coordinate ascent.
To this end, the unconstrained objective is first maximized with respect to $u_1$, which gives
\begin{equation*}
u_1 = e \ett ./ (A w_1).
\end{equation*}
Further, note that the gradient of the unconstrained objective with respect to $v_t$ is
\begin{equation*}
- \frac{1}{e} \odot \left( B^T (y_t \odot A w_{t+1}) \right)+ \Phi_{t}./v_{t},
\end{equation*}
where $y_t$ is defined by the recursion
\begin{equation*}
\begin{aligned}
y_1 &= A^T (\mu_0 \odot u_1),\\
y_t &= A^T( y_{t-1} \odot (B v_{t-1}) ), \quad t=2,\dots,T.
\end{aligned}
\end{equation*}
Hence, maximization with respect to $v_t$ is achieved by
\begin{equation*}
v_{t} = e \Phi_{t} ./ \left( B^T (y_t \odot Aw_{t+1} ) \right).
\end{equation*}
As the unconstrained problem is convex and the objective function continuously differentiable, the block coordinate ascent method converges \cite[Prop.~2.7.1]{Bertsekas99}.  
In the limit point, the hidden marginals can be reconstructed from
\begin{equation*}
\begin{aligned}
\mu_t = M_t^T \ett &= \frac{1}{e} \diag(w_t) A^T (\mu_{t-1} \odot u_t)\\
&= \diag(w_t) A^T (\mu_{t-1} ./(Aw_t)).
\end{aligned}
\end{equation*}
\balance
\bibliographystyle{plain}

\bibliography{./ref}

\end{document}